\theoremstyle{plain}
  \newtheorem{thm}{Theorem}[section]
  \newtheorem{fact}[thm]{Fact}
\theoremstyle{definition}
  \newtheorem{clm}[thm]{Claim}
\theoremstyle{remark}
\newcommand{\Amp}{{\operatorname{Amp}}}
\newcommand{\Aut}{\operatorname{Aut}}
\newcommand{\CC}{{\mathbb{C}}}
\newcommand{\Ext}{\operatorname{Ext}}
\newcommand{\NS}{\operatorname{NS}}
\newcommand{\Pic}{\operatorname{Pic}}
\newcommand{\ZZ}{{\mathbb{Z}}}
\begin{document}
\bibliographystyle{amsplain}
\title[Finiteness of isomorphic classes]
{Finiteness of isomorphic classes in the set of \\
moduli schemes of sheaves on a surface }
\author{Kimiko Yamada}
\email{kyamada@math.kyoto-u.ac.jp}
\address{Department of mathematics, Kyoto University, Japan}
\subjclass{Primary 14J60; Secondary 14D20}
\maketitle
\begin{abstract}
When a non-singular complex projective surface $X$ satisfies that $K_X\sim 0$, 
we shall show that there are only finitely many isomorphic classes as
abstract schemes in the set of moduli schemes of $H$-semistable sheaves with
fixed Chern classes $\alpha$ on $X$, where $H$ runs over the set of all $\alpha$-generic
polarizations on $X$.
\end{abstract}
\section{Introduction}
Let $X$ be a non-singular projective surface over $\CC$, 
$\alpha$ an element of $(r,c_1,c_2)\in \ZZ_{>0}\times \NS(X)\times \ZZ$ with $r>0$, 
and $H$ an ample line bundle (polarization) on $X$.
Then there is a coarse moduli scheme $M(H,\alpha)$ of $H$-semistable
sheaves of type $\alpha$ on $X$. It is projective over $\CC$.
Here we say that $M(H,\alpha)$ and $M(H', \alpha)$ are
{\itshape isomorphic by definition} 
if (i) any sheaf of type $\alpha$ on $X$ is $H$-stable
(resp. $H$-semistable) if and only if it is $H'$-stable
(resp. $H'$-semistable), and (ii) any $H$-semistable two sheaves of type $\alpha$ 
are S-equivalent with respect to $H$-semistability if and only if
they are S-equivalent with respect to $H'$-semistability.
The set 
\[ \left\{ M(H,\alpha) \bigm| \text{$H$: $\alpha$-generic polarization on $X$} \right\}
\left/ \text{(isomorphism by definition)} \right. \]
is countable and can be infinite. However we shall prove
\begin{thm}\label{thm:main}
When $r>0$ and $K_X\sim 0$, the set
\[ \left\{ M(H,\alpha) \bigm| \text{$H$: $\alpha$-generic polarization on $X$}\right\}
 \left/ \text{(isomorphism as abstract schemes)} \right. \]
is finite. 
\end{thm}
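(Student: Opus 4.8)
The plan is to translate the problem into the birational geometry of the moduli spaces and then to invoke finiteness of minimal models. First I would recall the wall-and-chamber structure: for fixed $\alpha=(r,c_1,c_2)$ the ample cone of $X$ is cut by a locally finite family of walls, each defined by a class $\xi\in\NS(X)$ with $\xi^2<0$, where $\xi$ arises from a destabilizing sub- and quotient sheaf and $\xi^2$ is bounded below by a constant depending only on $\alpha$ (via the Bogomolov inequality). On each chamber the notion of stability, hence $M(H,\alpha)$ up to isomorphism by definition, is constant. Because the intersection form on $\NS(X)$ has signature $(1,\rho-1)$, there can be infinitely many such $\xi$ defining distinct nonempty walls, hence infinitely many chambers; this is exactly the source of the infiniteness of the quotient by isomorphism by definition. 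Everything therefore reduces to controlling how $M(H,\alpha)$ changes when $H$ crosses a single wall.

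Second, I would use $K_X\sim0$ to pin down the geometry of each model. Since $H$ is $\alpha$-generic, every $H$-semistable sheaf of type $\alpha$ is $H$-stable, so $M(H,\alpha)$ is a geometric quotient parametrizing stable (hence simple) sheaves. For such an $E$, Serre duality gives $\Ext^2(E,E)\cong\Hom(E,E\otimes K_X)^\vee\cong\Hom(E,E)^\vee=\CC$, using $K_X\sim0$; thus the trace-free obstruction space $\Ext^2(E,E)_0$ vanishes and $M(H,\alpha)$ is \emph{smooth}. The Yoneda pairing on $\Ext^1(E,E)$ then furnishes a holomorphic symplectic form, so $K_{M(H,\alpha)}\cong\mathcal{O}$: each model is smooth projective with trivial canonical bundle. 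This step is where $K_X\sim0$ enters decisively.

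Third, I would package the wall-crossing comparison of the two Quot-scheme quotients $\Qm$ and $\Qp$ attached to the two sides of a wall into the statement that $M(H_-,\alpha)$ and $M(H_+,\alpha)$ coincide on the open locus of sheaves stable on both sides, whose complement (sheaves admitting a subsheaf of slope equal to the wall value) is a proper closed subset; hence they are birational. As the ample cone is connected and the walls are locally finite, any two of the $M(H,\alpha)$ are then birational. Combined with the previous step, all the $M(H,\alpha)$ are smooth projective varieties with trivial canonical bundle that are mutually birational; since a birational map between smooth projective varieties with trivial canonical bundle is automatically an isomorphism in codimension one, they are all minimal models of a fixed one among them. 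Finiteness of minimal models up to isomorphism (for varieties with trivial canonical bundle this follows from the Morrison--Kawamata cone conjecture for such varieties) then produces finitely many isomorphism classes of abstract schemes, which is the assertion.

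The main obstacle is precisely this last passage, from infinitely many chambers to finitely many isomorphism classes. It is here that $K_X\sim0$ is indispensable: it forces each wall-crossing to be crepant, a flop rather than a flip, so that all models share the same trivial canonical class and the finiteness-of-minimal-models input applies; for $K_X\not\sim0$ the two sides would carry different canonical classes and the conclusion genuinely fails, a useful consistency check. The remaining points requiring care are the constancy of dimension and the nonemptiness of $M(H,\alpha)$ across walls, so that the word ``birational'' is meaningful (the uniformly empty case being trivial), and the verification that the identification coming from the $\Qm$--$\Qp$ comparison is an honest isomorphism in codimension one rather than merely a set-theoretic bijection.
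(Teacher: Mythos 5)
Your argument fails at precisely the step you identify as the main obstacle, and the failure is not repairable by the route you propose. The finiteness input you invoke --- ``finiteness of minimal models up to isomorphism \ldots follows from the Morrison--Kawamata cone conjecture for such varieties'' --- is applied to the moduli spaces $M(H,\alpha)$ themselves, which are projective varieties whose dimension grows with $\alpha$ and can be arbitrarily large. In dimension $\geq 3$ the Morrison--Kawamata cone conjecture, and with it the finiteness of minimal models in a fixed birational class with trivial canonical class, is an open conjecture, not a citable theorem; so your proof replaces the statement to be proved by one that is at least as hard. (The cases in which the cone conjecture has been verified for higher-dimensional holomorphic symplectic manifolds postdate this paper, and in any event would not cover the moduli spaces arising here, which need not be smooth: see below.) The paper sidesteps this entirely by applying the cone theorem not to $M(H,\alpha)$ but to the surface $X$ itself, where $K_X\sim 0$ and $\dim X=2$ make it a known theorem (Fact \ref{fact:Morrison}, due to Sterk and Kawamata): there is a finite rational cone $\Lambda\subset\overline{\Amp}(X)$ with $\Aut(X)\cdot\Lambda\supset\Amp(X)$. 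Since an automorphism $\tau$ changes $c_1$, the paper passes to the finite-index subgroup $\Aut'(X,c_1)=\{\tau\mid c_1\equiv\tau_*c_1 \bmod r\NS(X)\}$, enlarges $\Lambda$ to $\Lambda'$ using coset representatives, and for $\tau_0\in\Aut'(X,c_1)$ composes $E\mapsto\tau_{0*}E$ with $E\mapsto E\otimes L$ (Fact \ref{fact:otimesL}) to obtain an honest isomorphism $M(H,\alpha)\simeq M(\tau_{0*}H,\alpha)$ of abstract schemes with $\tau_{0*}H\in\Lambda'$; since $\Lambda'$ meets only finitely many $\alpha$-chambers (Fact \ref{fact:FiniteMeetLambda}), finiteness follows. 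No birational geometry of the moduli spaces is needed at all.

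Two of your intermediate claims are also incorrect as stated, independently of the main gap. First, $\alpha$-genericity of $H$ does not imply that every $H$-semistable sheaf of type $\alpha$ is $H$-stable: walls only detect destabilizing subsheaves whose numerical class $\eta$ is nonzero, so e.g.\ for $\alpha=(2,0,2n)$ the sheaf $I_{Z_1}\oplus I_{Z_2}$ with $l(Z_1)=l(Z_2)=n$ is strictly semistable for \emph{every} polarization. Hence $M(H,\alpha)$ can be singular along the strictly semistable locus, and your Serre-duality/symplectic-form argument only shows smoothness of the stable locus; this also means the minimal-model machinery would have to be run for singular symplectic varieties, where the cone conjecture is even further from being available. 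Second, birationality of the models on the two sides of a wall requires the locus of commonly stable sheaves to be dense and nonempty on both sides, which can fail for fixed small $\alpha$ --- this is exactly why the Introduction cites Qin's birationality result only under the hypothesis that $2rc_2-(r-1)c_1^2$ is sufficiently large. The paper's automorphism-plus-twist argument is immune to both issues because it produces isomorphisms, not birational maps, and never needs stability to coincide with semistability.
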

From \cite{Qi:birational},
$M(H, \alpha)$ and $M(H', \alpha)$ are birationally equivalent if
$2rc_2-(r-1)c_1^2$ is sufficiently large with respect to $H$ and $H'$.
When $X$ is minimal and $\kappa(X)>0$
there is a moduli-theoretic analogy of minimal model program of
$M(H, \alpha)$ by \cite{Yam:flip}. If $K_X\sim 0$, then we can use Theorem
and $M(H, \alpha)$ and $M(H', \alpha)$ are connected by Mukai flops.
Thus when $X$ is minimal and $\kappa(X)\geq 0$, one can use there results
about birational relation between $M(H, \alpha)$ and $M(H',\alpha)$.\par
 The author expresses hearty thanks to Prof. S. Mukai and Prof. Y. Namikawa for their
invaluable suggestions and comments. 

\section{Proof of Theorem}
We begin with preliminary.
The notion of $\alpha$-{\it walls} (or walls with respect to $\alpha$)
appeared in \cite{EG:variation}, \cite{FQ:flips} and \cite{MW:Mumford}.
A connected component of the complement of the union of all $\alpha$-walls
in the ample cone $\Amp(X)$ of $X$ is called an $\alpha$-{\it chamber}.
A polarization on $X$ is said to be $\alpha$-{\it generic} if
it is contained in no $\alpha$-wall.\par
Now let $H$ and $H'$ be $\alpha$-generic polarizations.
As mentioned in Introduction, the set 
\begin{equation}\label{eq:isomdef}
 \left\{ M(H,\alpha) \bigm| \text{$H$: $\alpha$-generic polarization on $X$} \right\}
\left/ \text{(isomorphism by definition)} \right. 
\end{equation}
is countable and can be infinite. 
Indeed, in case where $r=2$, any $\alpha$-wall $W$ equals
$W^{\eta}=\left\{ L\in\Amp(X) \bigm| L\cdot \eta=0 \right\}$
with $\eta\in\NS(X)$ such that $0< -\eta^2 \leq 4c_2-c_1^2$.
When $\alpha$-generic polarizations $H$ and $H'$ lie in adjacent $\alpha$-chambers
separated by $W$, a sheaf $E$ of type $\alpha$ is $H$-semistable and not 
$H'$-semistable if and only if $E$ is given by a nonsplit extension
\begin{equation}\label{eq:ext}
 0 \longrightarrow {\mathcal O}_X(D)\otimes I_{Z_l} \longrightarrow
   E \longrightarrow {\mathcal O}_X(c_1-D)\otimes I_{Z_r} \longrightarrow 0, 
\end{equation}
where $Z_l$ and $Z_r$ are zero-dimensional subscheme in $X$ 
such that $l(Z_l)+l(Z_r)=c_2-c_1^2/4+ \eta^2/4$ and
$D$ is a divisor such that $2D-c_1 \sim \eta$.
When $K_X\sim 0$, it holds that
\begin{multline}\label{eq:chi}
 -\chi({\mathcal O}_X(c_1-D)\otimes I_{Z_r}, {\mathcal O}_X(D)\otimes I_{Z_l})=
 -\eta^2/2 +l(Z_l) +l(Z_r) -\chi({\mathcal O}_X)    \\
= 2c_2-c_1^2/2 -l(Z_l)-l(Z_r) -\chi({\mathcal O}_X) 
> 2c_2-c_1^2/2 -c_2+c_1^2/4 -\chi({\mathcal O}_X) > -\chi({\mathcal O}_X),
\end{multline}
where the first inequality holds since $-\eta^2/4= c_2-c_1^2/4-l(Z_l)-l(Z_r)>0$
by the Hodge index theorem,
and the second inequality holds from Bogomolov's inequality.
Thus if $K_X\sim 0$ and $\chi({\mathcal O}_X)\leq 0$, then
\eqref{eq:chi} implies 
$\Ext^1({\mathcal O}_X(c_1-D)\otimes I_{Z_r}, {\mathcal O}_X(D)\otimes I_{Z_l})\neq 0$
and a $H$-semistable sheaf of type $\alpha$ which is not $H'$-semistable
does exist. As a result $M(H,\alpha)$ and $M(H', \alpha)$ are not isomorphic 
by definition if
$\alpha$-generic polarizations $H$ and $H'$ are separated by a $\alpha$-wall.
On the other hand, Matsuki-Wentworth \cite{MW:Mumford} gave an example
of an Abelian surface 
where there are infinitely many $\alpha$-walls for some $\alpha=(2,c_1,c_2)$.
Consequently the set \eqref{eq:isomdef} can be infinite.\par
Now we prove Theorem \ref{thm:main}. We shall use the following three facts.
\begin{fact}\label{fact:otimesL}$[$\cite{Yos:abelian}, Lemma 1.1$]$
Let $L$ be a line bundle on $X$. When a polarization $H$ is $\alpha$-generic,
the map $E\mapsto E\otimes L$ induces an isomorphism
$\otimes L:\, M(H,\alpha) \rightarrow M(H, \alpha(L)) $, where $\alpha(L)$ is
the Chern class of $E\otimes L$ for a sheaf $E$ of type $\alpha$.
\end{fact}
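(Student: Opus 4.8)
The plan is to exhibit $\otimes L$ and $\otimes L^{-1}$ as mutually inverse morphisms of schemes. First I would record the numerical data: tensoring by $L$ preserves rank, sends $c_1 \mapsto c_1 + \rk(E)\,c_1(L)$ and shifts $c_2$ accordingly, so $\alpha(L)$ is determined. More to the point, for a subsheaf $F\subset E$ the ``discrepancy'' $\xi := \rk(F)\,c_1(E)-\rk(E)\,c_1(F)$ is literally unchanged when $E,F$ are replaced by $E\otimes L, F\otimes L$. Since an $\alpha$-wall is cut out by such a class via $\{\xi\cdot H=0\}$, the walls for $\alpha$ and for $\alpha(L)$ coincide; hence $H$ is $\alpha(L)$-generic as soon as it is $\alpha$-generic, and both moduli schemes in the statement are formed with the same generic $H$. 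The proof then splits into (a) checking that $\otimes L$ is a bijection preserving $H$-semistability and $S$-equivalence, and (b) promoting this set-map to a scheme morphism by a family argument.

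For (a), the heart is the behaviour of reduced Hilbert polynomials. As $L$ is invertible, $F\mapsto F\otimes L$ is an inclusion-preserving bijection between subsheaves of $E$ and of $E\otimes L$. A Riemann--Roch computation gives, writing $p_F$ for the reduced Hilbert polynomial with respect to $H$,
\[
 p_{F\otimes L}(m)=p_F(m)+(c_1(L)\cdot H)\,m+\frac{c_1(F)\cdot c_1(L)}{\rk(F)}+\delta_L ,
\]
where $\delta_L$ depends only on $L$ and $X$. The linear term $(c_1(L)\cdot H)m$ and the constant $\delta_L$ are common to $F$ and to $E$, so they cancel in any comparison, while the slope term shows $\mu_H$ shifts uniformly, whence slope-(semi)stability is preserved verbatim. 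The only dangerous term is $c_1(F)\cdot c_1(L)/\rk(F)$, which can differ for $F$ and $E$ and lives in the constant coefficient that decides the Gieseker order once the $H$-slopes coincide. This is exactly where $\alpha$-genericity is indispensable, and is the main obstacle: if $\mu_H(F)=\mu_H(E)$ then $\xi\cdot H=0$; since $E$ is semistable, the Hodge index theorem and Bogomolov's inequality bound $\xi^2$, so were $\xi\neq0$ it would cut out an $\alpha$-wall through $H$, contradicting genericity. Hence $\xi=0$, i.e. $c_1(F)/\rk(F)=c_1(E)/\rk(E)$, so $c_1(F)\cdot c_1(L)/\rk(F)=c_1(E)\cdot c_1(L)/\rk(E)$ and the dangerous terms cancel as well. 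Therefore $p_{F\otimes L}\leq p_{E\otimes L}$ (resp.\ $<$) if and only if $p_F\leq p_E$ (resp.\ $<$), so $\otimes L$ preserves both semistability and stability, with inverse $\otimes L^{-1}$.

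Since $\otimes L$ carries stable sheaves to stable sheaves and is compatible with Jordan--Hölder filtrations --- tensoring a filtration of $E$ by $L$ yields one of $E\otimes L$ whose graded pieces $\operatorname{gr}_i\otimes L$ are again stable of the same reduced Hilbert polynomial by the above --- $S$-equivalent sheaves are sent to $S$-equivalent sheaves, so $\otimes L$ is a well-defined bijection on the closed points of the two coarse moduli schemes. Finally, to get a morphism rather than a mere bijection, I would run the same construction in families: for an $S$-flat family $\mathcal E$ on $X\times S$ of $H$-semistable sheaves of type $\alpha$, the sheaf $\mathcal E\otimes \pr_X^*L$ is $S$-flat with fibres $H$-semistable of type $\alpha(L)$, functorially in $S$. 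This defines a natural transformation of the associated moduli functors, which by corepresentability of these functors by $M(H,\alpha)$ and $M(H,\alpha(L))$ descends to a morphism of schemes $\otimes L$. The family $\otimes L^{-1}$ furnishes a two-sided inverse, so $\otimes L$ is an isomorphism of schemes.
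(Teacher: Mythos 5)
The paper does not prove this statement at all: it is quoted as a Fact with a citation to Yoshioka's Lemma 1.1 in \cite{Yos:abelian}, so there is no internal proof to compare against. Your argument is a correct, self-contained proof, and it runs along the standard lines that the cited lemma relies on: the Riemann--Roch computation showing that tensoring by $L$ shifts reduced Hilbert polynomials by a term that is uniform except for the cross term $c_1(F)\cdot c_1(L)/\rk(F)$, the observation that the wall classes $\xi=\rk(F)c_1(E)-\rk(E)c_1(F)$ are invariant under $\otimes L$ (so the $\alpha$-walls and $\alpha(L)$-walls coincide and $H$ stays generic), the use of genericity plus Hodge index and Bogomolov to force $\xi\equiv 0$ whenever $\mu_H(F)=\mu_H(E)$, and finally the family/corepresentability argument upgrading the bijection to an isomorphism of coarse moduli schemes, with $\otimes L^{-1}$ as inverse. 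One step you compress deserves a sentence more: to apply Bogomolov's inequality you should first replace $F$ by its saturation (harmless, since saturating can only increase the reduced Hilbert polynomial and, by $\mu_H$-semistability of $E$, the slope stays equal), after which both $F$ and $E/F$ are $\mu_H$-semistable --- any subsheaf of $F$ is a subsheaf of $E$, and any quotient of $E/F$ is a quotient of $E$ --- so that the discriminant bound $0<-\xi^2\leq C\,\Delta(\alpha)$ defining an $\alpha$-wall actually applies. This is standard and does not constitute a gap; your proposal is sound and, in effect, supplies the proof that the paper chose to import by reference.
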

\begin{fact}\label{fact:FiniteMeetLambda}$[$\cite{MW:Mumford}, Lemma 1.5'$]$
If $\Lambda\subset \overline{\Amp}(X)$ is a finite rational cone,
then only finitely many $\alpha$-walls intersect1 with $\Lambda$.
\end{fact}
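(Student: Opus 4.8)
The plan is to convert the statement into a counting problem for integral classes in $\NS(X)_\RR$ with the intersection form, which by the Hodge index theorem has signature $(1,\rho-1)$ with $\rho=\rk\NS(X)$, and then to isolate the mechanism that produces infinitely many walls in the unrestricted ample cone. As in the rank-two description given above, every $\alpha$-wall is of the form $W^\eta=\{L\in\Amp(X):L\cdot\eta=0\}$ for an integral class $\eta\in\NS(X)$ whose self-intersection is negative and bounded below, say $-b(\alpha)\le\eta^2\le-1$, where $b(\alpha)$ comes from the discriminant together with Bogomolov's inequality. Since this bound lets each wall arise from only finitely many $\eta$, it suffices to prove that only finitely many integral $\eta$ with $-b(\alpha)\le\eta^2\le-1$ satisfy $\eta^\perp\cap\Lambda\cap\Amp(X)\neq\emptyset$, that is, for which $\eta^\perp$ meets $\Lambda$ in a strictly ample class.

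First I would fix an ample class $A$ with $A^2=1$ and invoke Hodge index: the form is negative definite on $A^\perp$, so writing $x=(A\cdot x)A+x^\perp$ endows $A^\perp$ with a genuine inner product $\langle\,\cdot\,,\,\cdot\,\rangle$ equal to minus the intersection form. Because $\Lambda\subset\overline{\Amp}(X)$, every nonzero $x\in\Lambda$ has $A\cdot x>0$, so the slice $K=\{x\in\Lambda:A\cdot x=1\}$ is a compact rational polytope, and a wall meets $\Lambda$ at an ample class exactly when $\eta^\perp$ meets $K^{+}:=\{x\in K:x^2>0\}$. Writing $\eta=nA+\eta^\perp$ with $n=A\cdot\eta$, the orthogonality $\eta\cdot x=0$ reads $n=\langle\eta^\perp,x^\perp\rangle$, while $\eta^2=n^2-\|\eta^\perp\|^2$ gives $\|\eta^\perp\|^2=n^2-\eta^2\le n^2+b(\alpha)$. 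Hence if the meeting point satisfies $x^2\ge\varepsilon$, equivalently $\|x^\perp\|^2\le 1-\varepsilon$, then Cauchy--Schwarz yields
\[ n^2=\langle\eta^\perp,x^\perp\rangle^2\le(1-\varepsilon)\|\eta^\perp\|^2\le(1-\varepsilon)\bigl(n^2+b(\alpha)\bigr),\qquad\text{so}\qquad \varepsilon\,n^2\le(1-\varepsilon)\,b(\alpha). \]
Thus $n$, and then $\|\eta^\perp\|^2\le n^2+b(\alpha)$, are bounded, so $\eta$ ranges over a finite set: for every $\varepsilon>0$ only finitely many walls meet $K\cap\{x^2\ge\varepsilon\}$.

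It remains to control walls that meet $K$ only arbitrarily close to the boundary quadric $\{x^2=0\}$ (which in the slice is the sphere $\|x^\perp\|=1$), and this is the step I expect to be the main obstacle. For such a sequence $\eta_j^\perp$ one has $\|\eta_j\|\to\infty$, so the directions $\eta_j/\|\eta_j\|$ accumulate at an isotropic direction $u$ and the meeting points accumulate at an isotropic point of $K$ on $\RR u$. This is precisely the phenomenon behind the Matsuki--Wentworth example of infinitely many walls in the whole ample cone of an abelian surface: the accumulating $\eta_j$ solve a Pell-type equation and their limiting isotropic directions are irrational. The plan is to use the hypothesis that $\Lambda$ is a \emph{finite rational} cone exactly here: a rational polyhedral cone can be separated from every irrational isotropic direction, so no accumulation into $\Lambda$ occurs there, while a rational isotropic direction meets $\overline{\Amp}(X)$ only along a boundary ray, which is excluded from every $W^\eta$ by the open-cone condition in the definition of a wall. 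Making this uniform --- ruling out that infinitely many of the thin slivers $\eta_j^\perp\cap\overline{\Amp}(X)$ poke into the ample interior of $\Lambda$ near such a boundary point --- is the delicate point, and it is where the rationality of $\Lambda$ is indispensable, the counterexample above showing that finiteness must fail without it.
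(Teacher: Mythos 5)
The paper never proves this Fact---it is imported verbatim from \cite{MW:Mumford}, Lemma 1.5$'$---so your attempt has to be judged against the argument in that source rather than against anything in this paper. The first half of your proposal is correct and is the right opening move: slicing $\Lambda$ by $\{A\cdot x=1\}$ gives a compact set $K$, and your Cauchy--Schwarz estimate $\varepsilon n^2\le(1-\varepsilon)b(\alpha)$ genuinely proves that for each fixed $\varepsilon>0$ only finitely many wall classes $\eta$ have $\eta^\perp$ meeting $K\cap\{x^2\ge\varepsilon\}$. But the second half is a genuine gap, and you say so yourself: nothing in your text excludes infinitely many walls that meet $K\cap\Amp(X)$ only at points with $x^2\to 0$, accumulating at an isotropic point of $\Lambda$. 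Your two remarks---that a rational cone can be separated from irrational isotropic directions, and that a rational isotropic ray consists of non-ample classes and hence lies on no wall---are both true but do not close this case: a wall need not \emph{contain} the boundary ray, it only needs to cut through ample points of $\Lambda$ arbitrarily close to it, and ruling that out uniformly is exactly the content of the lemma, not a refinement of it.

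Here is the missing argument, so you can see it is not a formality. First, since the closed positive cone is strictly convex and the generators of $\Lambda$ are rational, every isotropic point of $\Lambda$ lies on a rational isotropic extremal ray $\RR_{\ge0}v$, $v$ integral (if $x=\sum_i c_iv_i$ with $x^2=0$, then all cross terms $v_i\cdot v_j\ge 0$ must vanish, forcing the contributing $v_i$ to be isotropic and mutually proportional). Second, your Cauchy--Schwarz step upgrades to a statement about directions: if $\eta_j$ are infinitely many distinct wall classes with $\eta_j\cdot x_j=0$, $x_j\in K\cap\Amp(X)$, then after a subsequence and a sign normalization $n_j=A\cdot\eta_j>0$ one gets not only $x_j\to cv$ but also $\eta_j/n_j\to cv$, because near-equality in Cauchy--Schwarz forces $\eta_j^\perp/\|\eta_j^\perp\|$ and $x_j^\perp$ to align. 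Now integrality enters: $\eta_j\cdot v\in\ZZ$. For every other generator $w$ of $\Lambda$ one has $v\cdot w>0$, hence $\eta_j\cdot w=n_j\bigl((\eta_j/n_j)\cdot w\bigr)>0$ for large $j$. If $\eta_j\cdot v\ge0$, then $\eta_j\cdot x\ge 0$ for all $x\in\Lambda$ with equality only when $x\in\RR_{\ge0}v$, which is not ample, so the wall misses $\Lambda\cap\Amp(X)$---contradiction. If instead $\eta_j\cdot v\le-1$, choose a rational isotropic $v'$ with $v\cdot v'=1$ and write $\eta_j=a_jv+b_jv'+w_j$ with $w_j\perp v,v'$; then
\[ \eta_j^2=2a_jb_j+w_j^2\le 2(\eta_j\cdot v')(\eta_j\cdot v)\le-2(\eta_j\cdot v')\longrightarrow-\infty, \]
contradicting $\eta_j^2\ge-b(\alpha)$. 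This dichotomy, resting on the integrality of $\eta_j\cdot v$ against a rational generator $v$ of $\Lambda$, is precisely where the rationality hypothesis does its work, and it is the step your proposal identifies but leaves open.
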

\begin{fact}\label{fact:Morrison}$[$\cite{Sterk:finiteK3}, \cite{Kaw:coneCY}$]$
When $K_X\sim 0$, there is a finite rational cone 
$\Lambda\subset\overline{\Amp}(X)$ such that $\Aut(X)\cdot\Lambda \supset \Amp(X)$.
\end{fact}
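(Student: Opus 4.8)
The statement is the ``cone theorem'' for surfaces with trivial canonical class, and I would prove it by reducing to a lattice-theoretic fact about an arithmetic group acting on a hyperbolic cone. First I would use that $K_X\sim 0$ forces $X$ to be either a K3 surface or an abelian surface. In either case the Hodge index theorem shows that $\NS(X)$, with its intersection form, is a hyperbolic lattice, i.e.\ it has signature $(1,\rho-1)$ where $\rho=\rk\NS(X)$. Write $\mathcal{C}^+\subset\NS(X)\otimes\RR$ for the connected component of $\{x:x^2>0\}$ containing the ample classes; then $\overline{\Amp}(X)$ is a closed subcone of $\overline{\mathcal{C}^+}$ and $\Amp(X)\subset\mathcal{C}^+$. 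It therefore suffices to produce a rational polyhedral cone inside the nef cone whose $\Aut(X)$-translates cover $\Amp(X)$.

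The second step is to pass from $\Aut(X)$ to an arithmetic group of isometries. The natural action on $\NS(X)$ gives a homomorphism $\rho:\Aut(X)\to O(\NS(X))$ whose image preserves $\mathcal{C}^+$ and hence lands in $O^+(\NS(X))$. Only this image matters for the covering statement, since $\ker\rho$ acts trivially on $\NS(X)\otimes\RR$ (for an abelian surface $\ker\rho$ already contains all translations, so it is far from finite, but this is irrelevant). The crucial geometric input is the Torelli theorem. For a K3 surface the Piatetski-Shapiro--Shafarevich description identifies $\overline{\Amp}(X)$ with the closure of a fundamental chamber for the Weyl group $W\subset O^+(\NS(X))$ generated by reflections in the $(-2)$-classes, and shows that the image of $\rho$ has finite index in the subgroup of $O^+(\NS(X))$ preserving that chamber, equivalently of finite index in $O^+(\NS(X))/W$ up to $\{\pm 1\}$. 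For an abelian surface there are no $(-2)$-curves, $\overline{\Amp}(X)=\overline{\mathcal{C}^+}$, and the analogue asserts that the image of $\rho$ is an arithmetic, hence finite-index, subgroup of $O^+(\NS(X))$.

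With this reduction the statement becomes purely lattice-theoretic: a finite-index subgroup $\Gamma$ of $O^+(N)$ of a hyperbolic lattice $N$, acting on the closure of a Weyl chamber $D\subset\overline{\mathcal{C}^+}$, admits a rational polyhedral fundamental domain $\Lambda$, at least on the rational closure $D\cap\mathcal{C}^+_{\QQ}$ (the convex hull of $\mathcal{C}^+$ together with the rational rays on its boundary null-cone). I would prove this following Sterk: one shows that among the walls of $D$ meeting the rational closure there are only finitely many $\Gamma$-orbits, which rests on the finiteness of $\Gamma$-orbits of primitive lattice vectors of fixed bounded self-intersection — a standard consequence of arithmeticity — and then assembles finitely many representative walls into a rational polyhedral cone $\Lambda$ whose translates tile $D\cap\mathcal{C}^+_{\QQ}$. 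Pulling $\Lambda$ back through $\rho$ and using that $\ker\rho$ acts trivially on $\NS(X)\otimes\RR$ gives the desired $\Lambda\subset\overline{\Amp}(X)$ with $\Aut(X)\cdot\Lambda\supset\Amp(X)$.

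The main obstacle is precisely this last lattice-theoretic step. The difficulty is that the full isometry group of a hyperbolic lattice need not possess a fundamental domain with finitely many faces on all of $\mathcal{C}^+$: a Dirichlet domain can acquire infinitely many walls accumulating toward irrational points of the boundary sphere. Restricting to the rational closure is what cures this, since $\Amp(X)$ never approaches those irrational boundary directions, but making the covering argument precise — controlling the walls near the rational boundary and verifying local finiteness there — is the technical heart, and the reason the result is quoted from \cite{Sterk:finiteK3} and \cite{Kaw:coneCY} rather than reproved here.
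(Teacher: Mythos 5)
The first thing to note is that the paper contains no proof of this statement at all: it is labelled a Fact and imported wholesale from \cite{Sterk:finiteK3} (the K3 case) and \cite{Kaw:coneCY} (covering in particular the abelian case), then used as a black box in the Claim of Section 2. So the only meaningful comparison is with the cited references, and your outline reconstructs their strategy faithfully: $K_X\sim 0$ forces $X$ to be K3 or abelian; $\NS(X)$ is hyperbolic by Hodge index; for a K3 surface the strong Torelli theorem of Piatetski-Shapiro--Shafarevich identifies the image of $\Aut(X)\to O^+(\NS(X))$, up to finite index, with the stabilizer of the Weyl chamber cut out by the $(-2)$-classes; and everything then reduces to the existence of a rational polyhedral fundamental domain for an arithmetic group acting on the rational closure of the round cone --- the lattice-theoretic theorem (due to Looijenga, and the engine of Sterk's paper) that you correctly single out as the technical heart and defer, exactly as the paper defers the whole Fact.

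Two caveats, neither fatal. First, for an abelian surface the finite-index property of the image of $\Aut(X)$ in $O^+(\NS(X))$ is not genuinely a Torelli statement: an isometry of $\NS(X)$ cannot be lifted through $H^2$ as in the K3 case, and the correct argument identifies $\NS(X)_\QQ$ with the Rosati-symmetric part of $\operatorname{End}(X)_\QQ$ and invokes arithmeticity of the image of the unit group (alternatively this case is subsumed in \cite{Kaw:coneCY}, whose methods are different). Second, you aim at a fundamental domain, which is stronger than what the Fact asserts; only the covering $\Aut(X)\cdot\Lambda\supset\Amp(X)$ is needed for the paper's Claim, so this extra strength is harmless but not required.
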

In general $c_1(\tau_*(E))\neq c_1(E)$ in $\NS(X)$ for $\tau\in\Aut(X)$,
so $\tau$ does NOT induce an isomorphism between $M(H, \alpha)$ and 
$M(\tau_*(H),\alpha)$.
Thus Theorem \ref{thm:main} does not follow Fact \ref{fact:FiniteMeetLambda} and
Fact \ref{fact:Morrison} themselves.
For $\alpha=(r,c_1,c_2) \in \ZZ_{>0}\times\NS(X)\times\ZZ$,
put $\Aut'(X,c_1)=\left\{ \tau\in\Aut(X) \bigm| 
c_1\equiv\tau_* c_1 \text{ mod } r\NS(X)\right\}$.
Then $\Aut(X)\left/ \Aut'(X,c_1) \right.$ is finite since the map
$\iota:\, \Aut(X)/ \Aut'(X,c_1) \rightarrow \NS(X)/ r\NS(X)$
defined by $\iota(\tau\Aut'(X,c_1))=[c_1-\tau_*c_1]$ is injective and
$r>0$.
Let $\tau_1,\dots,\tau_N$ be representative elements of $\Aut(X)/ \Aut'(X,c_1)$.
By Fact \ref{fact:Morrison}, there is a finite rational cone
$\Lambda\subset\overline{\Amp}(X)$ such that $\Aut(X)\cdot\Lambda \supset \Amp(X)$.
Let $\Lambda'\subset\overline{\Amp}(X)$ be the finite rational cone spanned
by $\tau_1^*(\Lambda),\dots, \tau_N^*(\Lambda)$.
By Fact \ref{fact:FiniteMeetLambda}, only finitely many $\alpha$-chambers,
say ${\mathcal D}_1,\dots, {\mathcal D}_M$, intersect with $\Lambda'$.
Let $H_j$ be a polarization contained in ${\mathcal D}_j$.
\begin{clm}
For any $\alpha$-generic polarization $H$, we have some $1\leq j\leq M$
such that $M(H)\simeq M(H_j)$ as abstract schemes.
\end{clm}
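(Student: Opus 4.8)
The plan is to reduce an arbitrary $\alpha$-generic polarization $H$ to one lying in the finitely many chambers meeting $\Lambda'$, using the automorphisms $\tau_1,\dots,\tau_N$ together with the twist isomorphisms from Fact \ref{fact:otimesL}. First I would apply Fact \ref{fact:Morrison}: since $\Aut(X)\cdot\Lambda\supset\Amp(X)$, there is some $\tau\in\Aut(X)$ with $\tau_*(H)\in\Lambda\subset\Lambda'$. Write $\tau=\tau_j\sigma$ for some $1\le j\le N$ and some $\sigma\in\Aut'(X,c_1)$, so that $\tau^*=\sigma^*\tau_j^*$ and $(\tau_j)_*(H)=\sigma_*(\tau_*H)$ lies in $\tau_j^*$-translate data that I can control. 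The point of splitting off $\sigma\in\Aut'(X,c_1)$ is exactly the defect noted just before the claim: a general automorphism does not fix $c_1$ modulo $r\NS(X)$, but an element of $\Aut'(X,c_1)$ does, and that congruence is what lets a pushforward be corrected by a line-bundle twist.

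Next I would track the Chern class through $\sigma_*$. Because $\sigma\in\Aut'(X,c_1)$, we have $\sigma_*c_1\equiv c_1$ mod $r\NS(X)$, so $\sigma_*c_1-c_1=rc_1(L)$ for some line bundle $L$ on $X$; since $K_X\sim 0$ the canonical class is $\Aut(X)$-invariant and the discriminant $2rc_2-(r-1)c_1^2$ is preserved, so $\sigma_*\alpha=\alpha'$ differs from $\alpha$ precisely by the Chern class of a twist by $L$. Concretely $\sigma_*\alpha=\alpha(L)^{\vee\text{-corrected}}$ in the sense of Fact \ref{fact:otimesL}: there is a line bundle $L$ with $\alpha(L)=\sigma_*\alpha$, using that $r>0$ to solve $rc_1(L)=\sigma_*c_1-c_1$ in $\NS(X)$. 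The pushforward $\sigma$ itself gives a scheme isomorphism $M(H,\alpha)\xrightarrow{\sim}M(\sigma_*H,\sigma_*\alpha)$ (pushforward of sheaves preserves stability for the pushed-forward polarization), and then Fact \ref{fact:otimesL} supplies a scheme isomorphism $M(\sigma_*H,\sigma_*\alpha)\xrightarrow{\sim}M(\sigma_*H,\alpha)$ by untwisting $L$. Composing, $M(H,\alpha)\simeq M(\sigma_*H,\alpha)$ as abstract schemes, and $\sigma_*H=\tau_j^{-1}{}_*(\tau_*H)$ lies in $\Lambda'$ by construction of $\Lambda'$ as the cone spanned by the $\tau_j^*(\Lambda)$.

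Finally, $\sigma_*H$ is an $\alpha$-generic polarization lying in $\Lambda'$, hence in one of the chambers $\mathcal D_1,\dots,\mathcal D_M$ meeting $\Lambda'$, say $\mathcal D_j$. Since $\sigma_*H$ and $H_j$ lie in the same $\alpha$-chamber they are isomorphic by definition, and isomorphism by definition certainly gives isomorphism as abstract schemes $M(\sigma_*H,\alpha)\simeq M(H_j,\alpha)$. Chaining the isomorphisms yields $M(H,\alpha)\simeq M(H_j,\alpha)$, proving the claim.

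I expect the main obstacle to be the bookkeeping in the second paragraph: verifying that pushforward along $\sigma$ really does carry $H$-(semi)stable sheaves of type $\alpha$ to $\sigma_*H$-(semi)stable sheaves of type $\sigma_*\alpha$ and induces a scheme isomorphism of moduli, and that the congruence $\sigma_*c_1\equiv c_1 \bmod r\NS(X)$ is exactly the condition needed for Fact \ref{fact:otimesL} to absorb the discrepancy via a line-bundle twist. The invariance of $K_X$ (here $K_X\sim 0$) and of the discriminant under $\Aut(X)$ is what guarantees $\sigma_*\alpha$ is again an admissible Chern class with the same walls, so that genericity and the chamber structure transport correctly; making that compatibility precise is the crux, while the reduction to finitely many chambers via Facts \ref{fact:FiniteMeetLambda} and \ref{fact:Morrison} is then formal.
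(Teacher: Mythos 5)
Your proposal is correct and follows essentially the same route as the paper's own proof: choose $\tau\in\Aut(X)$ with $\tau_*H\in\Lambda$ via Fact \ref{fact:Morrison}, split $\tau=\tau_i\tau_0$ with $\tau_0\in\Aut'(X,c_1)$ so that $\tau_{0*}H\in\tau_i^*\Lambda\subset\Lambda'$, push forward by $\tau_0$, absorb the resulting change $\tau_{0*}c_1\equiv c_1 \bmod r\NS(X)$ by a line-bundle twist using Fact \ref{fact:otimesL}, check that $\tau_{0*}H$ is still $\alpha$-generic, and conclude it lies in one of the finitely many chambers ${\mathcal D}_1,\dots,{\mathcal D}_M$ meeting $\Lambda'$. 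The differences (the sign convention for $L$, and your discriminant-invariance argument for verifying $c_2$ is unchanged, which the paper leaves as a parenthetical check) are cosmetic.
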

\begin{proof}
 Since $\Aut(X)\cdot\Lambda\supset\overline{\Amp}(X)$, some $\tau\in\Aut(X)$
 satisfies $\tau_*H\in\Lambda$.
For some $1\leq i\leq N$, $[\tau]=[\tau_i]$ in $\Aut(X)/\Aut'(X,c_1)$, so
one has $\tau=\tau_i\cdot\tau_0$ with $\tau_0\in\Aut'(X,c_1)$.
Then $\tau_*H\in\Lambda$ implies that 
$\tau_{0*}H\in \tau_i^*\Lambda\subset \Lambda'$.
The map $E\mapsto \tau_{0*}E$ deduces an isomorphism
\[ \tau_{0*}:\, M(H,\alpha) \simeq M(\tau_{0*}H, (r, \tau_{0*}c_1, c_2)).\]
Because $\tau_0\in\Aut'(X,c_1)$, some $L\in\Pic(X)$ satisfies
$\tau_{0*}c_1=c_1-rL$ in $\NS(X)$.
By Fact \ref{fact:otimesL}, the map $E\mapsto E\otimes L$ deduces
\[ \otimes L:\, M(\tau_{0*}H, (r,\tau_{0*}c_1,c_2)) \simeq 
   M((r,\tau_{0*}c_1+rL=c_1,c_2), \tau_{0*}H). \]
(One can check that $c_2(\tau_{0*}E\otimes L)=c_2$.)
Since $\tau_{0*}c_1=c_1-rL$, one can verify that $\tau_{0*}H$ is $\alpha$-generic.
At last, $M(\tau_{0*}H, \alpha)=M(H_j, \alpha)$ for some $1\leq j\leq M$
because $\tau_{0*}H\in\Lambda'$.
\end{proof}
This claim ends the proof of Theorem \ref{thm:main}.
%

\providecommand{\bysame}{\leavevmode\hbox to3em{\hrulefill}\thinspace}
\providecommand{\MR}{\relax\ifhmode\unskip\space\fi MR }
\providecommand{\MRhref}[2]{%
  \href{http://www.ams.org/mathscinet-getitem?mr=#1}{#2}
}
\providecommand{\href}[2]{#2}

\end{document}